\documentclass{amsart}
\usepackage[utf8]{inputenc}
\usepackage{color}
\usepackage[english]{babel}
\usepackage{amsmath} 
\usepackage{amsfonts}
\usepackage{amssymb}
\usepackage{epigraph}
 \usepackage{graphicx}

\usepackage{setspace}
\usepackage[utf8]{inputenc}
\usepackage[T1]{fontenc}
\usepackage{graphicx}
\usepackage{amsthm}
\usepackage{mathrsfs}
\usepackage{bm}
\usepackage[all]{xy}
\usepackage{lscape}
\usepackage{latexsym}
\usepackage{epigraph}
\usepackage[Sonny]{fncychap}
\usepackage{amsfonts}

\theoremstyle{plain}
\newtheorem{theorem}{Theorem}

\newtheorem{lemma}{Lemma}
\newtheorem{corollary}{Corollary}

\theoremstyle{definition}

\theoremstyle{remark}

\definecolor{darkred}{rgb}{1,0,0} 
\definecolor{darkgreen}{rgb}{0,1,0}
\definecolor{darkblue}{rgb}{0,0,1}

\newcommand{\K}{\mathfrak{k}}

\definecolor{darkred}{rgb}{1,0,0} 
\definecolor{darkgreen}{rgb}{0,0.8,0}
\definecolor{darkblue}{rgb}{0,0,1}

\begin{document}

\title{On Killers of Cable  Knot Groups}

\author{Ederson R. F.  Dutra}
\address{Mathematisches Seminar, Christian-Albrechts-Universität zu Kiel, Ludewig-Meyn Str.~4, 24098 Kiel,
Germany}
\email{dutra@math.uni-kiel.de}
\thanks{ 
This work was supported by CAPES - Coordination for the Improvement of Higher Education Personnel-Brazil,  program Science Without Borders grant 13522-13-2}

\begin{abstract}
A killer of a group $G$ is an element that normally generates $G$.  We show that the group of a cable knot contains infinitely many killers such that no two lie in the same automorphic orbit.
\end{abstract}

\maketitle

\section{Introduction}

Let $G$ be an arbitrary group and $S\subseteq G$. We define the normal closure $\langle\langle S\rangle\rangle_G$ of $S$  as the smallest normal subgroup of $G$ containing $S$,   equivalently   
$$\langle\langle S\rangle\rangle_{G}= \{\prod_{i=1}^{k}u_is_i^{\varepsilon_i}u_i^{-1} \ | \ u_i\in G, \varepsilon_i=\pm 1, s_i\in S, k\in \mathbb{N}\}.$$ 
Following \cite{Simon}, we call an element $g\in G$ a \textit{killer} if $\langle\langle g\rangle\rangle_{G}=G$ . We say that two killers $g_1,g_2\in G$  are \textit{equivalent} if there exists an automorphism  $\phi:G\rightarrow G$  such that $\phi(g_1)=g_2$. 
 
Let $\K$ be a knot in $S^3$ and $V(\K)$ a regular neighborhood of $\K$. Denote by  
$$X(\K)= S^3-Int(V(\K)) $$ the knot manifold  of $\K$  and by   $G(\K)=\pi_1(X(\K))$ its group.  A \emph{meridian} of $\K$ is an element of $G(\K)$ which can be represented by a simple closed curve  on  $ \partial V(\K)$  that is contractible in $V(\K)$ but not contractible  in $\partial V(\K)$.  Thus a meridian is well defined up to conjugacy and inversion.

From a Wirtinger presentation of $G(\K)$ we see  that the meridian is a killer.  In \cite[Theorem 3.11]{Tsau} the author  exhibit    a knot for which there exists  a killer  that is not equivalent to the meridian. Silver–Whitten–Williams  \cite[Corollary 1.3]{Silver}  showed that if $\K$ is  a hyperbolic $2$-bridge knot or a  torus knot or  a    hyperbolic knot with unknotting number one, then its group contains infinitely many  pairwise inequivalent  killers. 

In \cite[Conjecture 3.3]{Silver} it is conjectured  that the group of  any nontrivial knot has infinitely many inequivalent killers, see also   \cite[Question 9.26]{Friedl}.  In this paper we show the following.

\begin{theorem}{\label{T1}}
 Let $\K$ be a cable knot  about a nontrivial knot $\K_1$. Then its group contains infinitely many pairwise inequivalent killers.
\end{theorem}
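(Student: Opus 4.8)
The plan is to reduce to the torus-knot case already quoted, via the canonical torus decomposition of $X(\K)$. Write $X(\K_1)$ for the companion exterior and $C$ for the cable space (the exterior of the cabling pattern in a solid torus, a Seifert fibred space over an annulus with one cone point of order $\ge 2$); the torus $T=\partial V(\K_1)$ splits $X(\K)$ as $C\cup_T X(\K_1)$, so
\[
G(\K)=G(\K_1)\ast_{P}A,\qquad P=\pi_1(T)\cong\mathbb Z^2,\ \ A=\pi_1(C),
\]
with $P$ the peripheral subgroup of $\K_1$, generated by its meridian $\mu_1$ and longitude $\ell_1$. A meridian $m$ of $\K$ lies in $A$, and Dehn filling $C$ along $m$ recovers the solid torus $V(\K_1)$; hence $A/\langle\langle m\rangle\rangle_A=\pi_1(V(\K_1))=\mathbb Z$, and in that quotient $\mu_1$ becomes trivial. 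Since $\mu_1$ normally generates $G(\K_1)$, the companion factor collapses in $G(\K)/\langle\langle m\rangle\rangle$ — this is the reason $m$ is a killer, and it is the pattern I would imitate. Finally, ``unknotting the companion'' gives a natural epimorphism $\rho\colon G(\K)\twoheadrightarrow G(T_{p,q})$ onto the group of the pattern torus knot $T_{p,q}$: $\rho$ is the identity on $A$ and factors $G(\K_1)$ through its abelianization $\mathbb Z$, so $\ker\rho=\langle\langle[G(\K_1),G(\K_1)]\rangle\rangle$ and $\rho(m)$ is a meridian of $T_{p,q}$; equivalently $G(T_{p,q})=A/\langle\langle\ell_1\rangle\rangle$.

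By the torus-knot case, $G(T_{p,q})$ has infinitely many pairwise inequivalent killers $k_1,k_2,\dots$, and through $\rho|_A$ each lifts to an element $\widetilde k_i\in A\subseteq G(\K)$, which I would take in the homology class of $m$ (with $\widetilde k_1=m$). The first substantial step is to show that every $\widetilde k_i$ is again a killer of $G(\K)$. Surjectivity of $\rho$ only yields $\langle\langle\widetilde k_i\rangle\rangle\cdot\ker\rho=G(\K)$, so one must see that $\langle\langle\widetilde k_i\rangle\rangle$ absorbs $\ker\rho$. I would run the meridian argument: since $\rho(\widetilde k_i)$ kills $G(T_{p,q})=A/\langle\langle\ell_1\rangle\rangle$, the group $A/\langle\langle\widetilde k_i\rangle\rangle_A$ is normally generated by the image of $\ell_1$, and the task is to choose the lifts $\widetilde k_i$ (adjusting within $\ker(\rho|_A)=\langle\langle\ell_1\rangle\rangle_A$, keeping them in the class of $m$) so that $\mu_1$ still dies in $A/\langle\langle\widetilde k_i\rangle\rangle_A$; then the companion collapses exactly as for $m$ and $G(\K)/\langle\langle\widetilde k_i\rangle\rangle=1$. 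I expect this — turning the nontrivial companion from a potential obstruction into harmless bulk — to be the main technical obstacle.

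For pairwise inequivalence I would use the invariant $N\mapsto G(\K)/\langle\langle\widetilde k_i^{\,N}\rangle\rangle$ (an isomorphism invariant of the $\mathrm{Aut}(G(\K))$-orbit of $\widetilde k_i$): it suffices to find, for each $m\ne n$, some $N$ with $G(\K)/\langle\langle\widetilde k_m^{\,N}\rangle\rangle\not\cong G(\K)/\langle\langle\widetilde k_n^{\,N}\rangle\rangle$. Via $\rho$ this quotient surjects onto $G(T_{p,q})/\langle\langle k_i^{\,N}\rangle\rangle$ — the quotients that separate the $k_i$ downstairs — with kernel the image of $\ker\rho=\langle\langle[G(\K_1),G(\K_1)]\rangle\rangle$; so what remains is to see that this companion contribution is recovered canonically from $G(\K)/\langle\langle\widetilde k_i^{\,N}\rangle\rangle$, so that distinctness persists upstairs. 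Here I would invoke rigidity of Haken manifolds (Waldhausen): every automorphism of $G(\K)$ is induced by a homeomorphism of $X(\K)$, which preserves the JSJ torus and hence the factors $G(\K_1)$ and $A$ up to conjugacy, pinning down the companion subgroup. One caveat to respect: a self-homeomorphism of $C$ need only preserve the Seifert-fibre (cabling) slope on $T$, not the slope $\ell_1$, so automorphisms of $G(\K)$ need not descend to automorphisms of $G(T_{p,q})$ — the argument must be carried out entirely in terms of the canonical quotients of $G(\K)$ itself. This bookkeeping, together with the choice of good lifts in the previous step, is where the real work lies.
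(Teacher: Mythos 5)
Your setup is correct (the JSJ splitting $G(\K)=A\ast_C B$, the filling argument showing why the meridian kills, and the epimorphism $\rho\colon G(\K)\twoheadrightarrow G(T_{p,q})$ with $G(T_{p,q})\cong A/\langle\langle \ell_1\rangle\rangle_A$ --- though $\rho|_A$ is this quotient map, not the identity). But both pivotal steps of your plan are stated as ``the main technical obstacle'' and ``where the real work lies'' rather than carried out, and neither is routine, so as it stands this is a strategy, not a proof. For step one, you never say which killers of the torus-knot group you take, nor how to choose lifts $\widetilde k_i\in A$ so that $\mu_1\in\langle\langle\widetilde k_i\rangle\rangle_{G(\K)}$; adjusting a lift within $\ker(\rho|_A)=\langle\langle\ell_1\rangle\rangle_A$ gives you no control over this. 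The paper avoids the lifting problem entirely: it writes down the explicit elements $g_l=x_1^lx_2^{-(l-1)}$ in $A=F(x_1,\dots,x_n)\rtimes\langle t\rangle$ and proves $\mu_1=x_1\cdots x_n$ is a product of conjugates of $g_l$ by a telescoping identity (conjugating $g_l$ by powers of $t^s$ cycles the indices, and the product $\prod_i x_{i+1}^l x_{i+2}^{-(l-1)}$ collapses to a conjugate of $x_1\cdots x_n$); after that the quotient reduces to $A/\langle\langle g_l,C\rangle\rangle_A$, which is visibly trivial. That computation is the actual content of the existence half, and your proposal does not contain a substitute for it.

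For inequivalence, your invariant $N\mapsto G(\K)/\langle\langle\widetilde k_i^{\,N}\rangle\rangle$ is the Silver--Whitten--Williams device, but to use it you must actually distinguish these quotients, which now contain the arbitrary companion group $B$; you correctly observe that automorphisms of $G(\K)$ need not descend through $\rho$ (the fibre slope, not $\ell_1$, is what a self-equivalence of the cable space preserves), but you leave the resulting bookkeeping unresolved, and it is not clear it can be resolved without detailed knowledge of $B$. The paper takes a different and more self-contained route here: Johannson's deformation theorem gives $\phi(A)=A$ (after ruling out, via conjugacy into $C$, the possibility that $\phi(A)$ is a proper conjugate) and lets one assume the restriction to $CS(m,n)$ is fibre-preserving, whence $\phi(t)=at^{\pm1}a^{-1}$; the structure of automorphisms of the free product $A/\langle t^n\rangle\cong\mathbb{Z}\ast\mathbb{Z}_n$ plus a homology constraint then forces $\phi(x_1)=bx_1^{\pm1}b^{-1}$, and a short computation in the abelianization of $F(x_1,\dots,x_n)$ forces $k=l$. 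If you want to salvage your approach, you would need (a) an explicit family of lifts with the normal-generation property verified, and (b) either the paper's rigidity argument or a genuine computation of the quotients $G(\K)/\langle\langle\widetilde k_i^{\,N}\rangle\rangle$; at present both are missing.
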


Moreover, we show that having infinitely many inequivalent killers is preserved under connected sums. As a Corollary we show that the group of any nontrivial knot whose exterior is a graph manifold contains infinitely many inequivalent killers. 

\section{Proof of Theorem 1}
Let $m,n$ be coprime  integers    with $n\geq 2$. The \emph{cable space} $CS({m,n})$ is defined  as follows: let $D^2=\{z\in \mathbb{C} \ | \ \Vert z\Vert \leq 1\}$ and $\rho :D^2\rightarrow D^2$ a rotation through an angle of $2\pi (m/n)$ about the origin. Choose a disk  $\delta\subset Int(D^2)$ such that $\rho^i(\delta)\cap \rho^j(\delta)=\emptyset$ for $1\leq i\neq j\leq n$ and  denote by  $D^2_n$   the space
$$D^2-Int\Big(\bigcup_{i=1}^n\rho^i(\delta)\Big). $$
 $\rho$ induces a homeomorphism $\rho_0:=\rho|_{D^2_n}:D_n^2\rightarrow D_n^2$. We define $CS(m,n)$ as the mapping torus of $\rho_0$, i.e. 
$$CS(m,n):=D^2_n\times I/(z,0)\sim (\rho_0(z),1).$$
Note that $CS(m,n)$ has the structure of a Seifert fibered space. Each fiber is  the image of $\{\rho^i(z)|1\leq i\leq n\} \times I$ under the quotient map, where $z\in D^2_n$.  There is exactly one exceptional fiber, namely the image $C_0$ of the arc $0\times I$.

In order to compute the fundamental group $A$ of $CS(m,n)$,  denote the free generators of $\pi_1(D_n^2)$ corresponding to  the boundary  paths of the removed disks $\rho_0(\delta),\ldots, \rho_0^n(\delta)$  by $x_1,\ldots,x_n$ respectively. From the definition of $CS(m,n)$ we see that we can write $A$ as the semi-direct product $F(x_1,\ldots,x_n)\rtimes \mathbb{Z}$, where the action of $\mathbb{Z}=\langle t\rangle$ on $\pi_1(D_n^2)=F(x_1,\ldots,x_n)$ is given by 
$$tx_it^{-1}=x_{\sigma(i)} \text{ for } 1\leq i\leq n.$$
The element $t$ is represented by the exceptional fiber of $CS(m,n)$ and the permutation  $\sigma:\{1,\ldots,n\}\rightarrow \{1,\ldots, n\}$ is given by $i  \mapsto i+m\text{ mod } n$. 
Thus  
$$ A=\langle x_1,\ldots,x_n, t \ | \ tx_it^{-1}=x_{\sigma(i)} \ \text{for} \ 1\leq i\leq n\rangle.$$ 
We finally remark that  any element  $a\in A$ is uniquely written as $w\cdot t^z$ for some $w\in F(x_1,\ldots, x_n)$ and $z\in \mathbb{Z}$.

We next define cable knots. Let $V_0$ be the solid torus
 $D^2\times I/(z,0)\sim (\rho(z),1) $ 
and for some $z_0\in Int(D^2)-0$ let  $\K_0$ be   the image of $\{\rho^i(z_0)\hspace{0.5mm}|\hspace{0.5mm}1\leq i\leq n\}\times I$ under the quotient map. Note that $\K_0$ is a simple closed curve contained in the interior of $V_0$. 
Let $\K_1$ be a nontrivial knot in $S^3$ and $V(\K_1)$ a regular neighborhood of $\K_1$ in $S^3$.  Let further $h:V_0\rightarrow V(\K_1)$ be a homeomorphism which maps  the meridian $\partial D^2\times 1$ of $V_0$ to a meridian of $\K_1$.  The knot $\K:=h(\K_0)$  is called a $(m,n)$\emph{-cable knot} about $\K_1$.

Thus the knot manifold $X(\K)$ of a $(m,n)$-cable knot $\K$ decomposes as
$$ X(\K)=CS({m,n})\cup X(\K_1)$$
with $\partial X(\K_1)=CS({m,n})\cap X(\K_1) $ an incompressible torus in $X(\K)$.  It follows from the Theorem of Seifert and van-Kampen that 
$$G(\K)=A\ast_{C}B$$ 
where $B=G(\K_1)$ and  $C=\pi_1 (\partial X(\K_1))$. Denote by $m_1$ the meridian of $\K_1$ and note that in $A$ we have $m_1=x_1\cdot \ldots\cdot  x_n$. In turn, the meridian $m\in G(\K)$ of $\K$ is written as  $m=x_1\in A$.  

The proof of Theorem~\ref{T1} is  divided in two steps. In Lemma~\ref{L1} we   exhibit elements that normally generate the  group  of the cable knot  and next, in Lemma~\ref{L2},  we prove that these killers are indeed inequivalent.
 
Choose    $s\in \{1,\ldots,n-1\}$ such that $\sigma^s(1)=2$. Since  $\sigma^s(i)=i+sm \text{ mod }n$ it follows that $\sigma^s= (1 \ \ 2  \ \ 3 \ \ldots  \ n-1 \ \ n)$.

\begin{lemma}\label{L1}
Let $\K $ be a $(m,n)$-cable knot about a nontrivial knot $\K_1 $. Then  for each $l\geq 1$ the element 
$$g_l: =x_{1}^lx_2^{-(l-1)}=x_1^l\cdot (t^sx_1t^{-s})^{-(l-1)}$$ 
normally generates the group of $\K $.
\end{lemma}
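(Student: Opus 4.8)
The plan is to show that the quotient $Q:=G(\K)/\langle\langle g_l\rangle\rangle_{G(\K)}$ is trivial. First I would exploit the conjugates of $g_l$ by powers of $t^s$: since $t^sx_it^{-s}=x_{\sigma^s(i)}=x_{i+1}$ (indices modulo $n$), conjugating $g_l$ by $t^{is}$ shows that $x_i^lx_{i+1}^{-(l-1)}\in\langle\langle g_l\rangle\rangle_A$ for every $i$, so the relations $x_i^l=x_{i+1}^{l-1}$ all hold in $A/\langle\langle g_l\rangle\rangle_A$. I would then combine them by a telescoping identity: $x_1^lx_2=x_2^{l-1}x_2=x_2^l=x_3^{l-1}$, hence $x_1^lx_2x_3=x_3^{l-1}x_3=x_3^l=x_4^{l-1}$, and inductively $x_1^lx_2x_3\cdots x_n=x_n^l=x_1^{l-1}$. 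Therefore $x_2\cdots x_n=x_1^{-1}$, so that $m_1=x_1x_2\cdots x_n=1$ in $A/\langle\langle g_l\rangle\rangle_A$; in other words the meridian $m_1$ of $\K_1$ already lies in $\langle\langle g_l\rangle\rangle_A\subseteq\langle\langle g_l\rangle\rangle_{G(\K)}$.

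Next, $m_1$ normally generates $B=G(\K_1)$ (from a Wirtinger presentation of $\K_1$), so $B\subseteq\langle\langle m_1\rangle\rangle_{G(\K)}\subseteq\langle\langle g_l\rangle\rangle_{G(\K)}$. Killing $B$ in the amalgam $G(\K)=A\ast_CB$ — which also kills $C$ and collapses the splitting onto the $A$-factor — therefore gives
\[
Q=(A\ast_CB)/\langle\langle g_l,B\rangle\rangle=A/\langle\langle g_l,C\rangle\rangle_A=A/\langle\langle g_l,\lambda_0\rangle\rangle_A ,
\]
where $C=\langle m_1,\lambda_0\rangle$ with $\lambda_0$ a longitude of the companion solid torus $V_0$, and the last equality uses $m_1\in\langle\langle g_l\rangle\rangle_A$. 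For the same reason $Q$ is a quotient of $A/\langle\langle C\rangle\rangle_A=A/\langle\langle m_1,\lambda_0\rangle\rangle_A$.

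It remains to see that $A/\langle\langle C\rangle\rangle_A$ is abelian. Here I would use that Dehn filling $CS(m,n)$ along $\lambda_0$ refills $V_0$ to a standardly embedded solid torus in $S^3$ and carries the pattern $\K_0$ to the $(m,n)$-torus knot, whence $A/\langle\langle\lambda_0\rangle\rangle_A\cong G(T(m,n))=\langle a,b\mid a^m=b^n\rangle$; moreover $m_1=x_1\cdots x_n$ is carried to one of the two generators (the meridian of $V_0$, i.e.\ the core of the complementary solid torus). Hence $A/\langle\langle C\rangle\rangle_A$ is the quotient of $G(T(m,n))$ by the normal closure of a generator, so it is a finite cyclic group, in particular abelian, and therefore so is $Q$. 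But then $Q=Q^{\mathrm{ab}}=G(\K)^{\mathrm{ab}}/\langle[g_l]\rangle$, and since $g_l=x_1^lx_2^{-(l-1)}$ has meridional exponent sum $l-(l-1)=1$ it represents a generator of $G(\K)^{\mathrm{ab}}=H_1(X(\K))\cong\mathbb{Z}$; hence $Q=1$, i.e.\ $\langle\langle g_l\rangle\rangle_{G(\K)}=G(\K)$.

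The telescoping identity in the first paragraph is the genuinely new point, but it is short. The step I expect to be the real obstacle is the last one: determining the homeomorphism type of the Dehn filling of $CS(m,n)$ along $\lambda_0$ and verifying that $m_1$ is carried to a generator of $G(T(m,n))$ — equivalently, that $A/\langle\langle C\rangle\rangle_A$ is cyclic. This can be done from the Seifert/solid-torus picture of $CS(m,n)$ indicated above, or by an explicit Tietze reduction of the presentation of $A$ after eliminating $t$ via the relation $\lambda_0=1$; one could also invoke the torus-knot case of \cite[Corollary~1.3]{Silver} in place of the final verification, although the abelianization argument already closes the gap directly.
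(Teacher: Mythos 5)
Your proposal is correct and follows essentially the same route as the paper: the same telescoping product of the conjugates $t^{is}g_lt^{-is}=x_{i+1}^lx_{i+2}^{-(l-1)}$ yields $m_1\in\langle\langle g_l\rangle\rangle_{G(\K)}$, whence one reduces to showing $A/\langle\langle g_l,C\rangle\rangle_A=1$, and the conclusion rests on the fact that $A/\langle\langle C\rangle\rangle_A$ is cyclic. The only differences are cosmetic: the paper asserts that cyclicity as ``easy to see'' and concludes from $\pi(g_l)=\pi(x_1)$, whereas you justify it via the Dehn-filling/torus-knot picture and finish with the abelianization count $[g_l]=l-(l-1)=1$ in $H_1(X(\K))\cong\mathbb{Z}$.
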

\begin{proof}

The first step of the proof is to show that the group of the companion knot is contained in $\langle\langle g_l\rangle\rangle_{G(\K)}$. 

\textbf{Claim 1:} The meridian $m_1=x_1 \cdot\ldots\cdot  x_n$ of $\K_1$ belongs to   $ \langle\langle g_l\rangle\rangle_{G(\K)} $. Consequently $B=\langle\langle m_1\rangle\rangle_{B}=\langle\langle m_1\rangle\rangle_{G(\K)}\cap B  \subseteq\langle\langle g_l \rangle\rangle_{G(\K)} $.

Note that for $0\leq i\leq n-1$ we have $t^{is}g_lt^{-is}=x_{i+1}^lx_{i+2}^{-(l-1)}$, where  indices are taken  mod $n$. Thus
\begin{eqnarray}\nonumber
x_1\cdot \ldots\cdot x_n &=& x_1^{-(l-1)}(x_1^lx_2\cdot \ldots\cdot x_nx_1^{-(l-1)})x_1^{l-1} \\ \nonumber
              &=& x_1^{-(l-1)} \Big(\prod_{i=0}^{n-1}x_{i+1}^lx_{i+2}^{-(l-1)}\Big)x_1^{l-1} \\ \nonumber 
              &=& x_1^{-(l-1)}\Big(\prod_{i=0}^{n-1} t^{is}\cdot g_l\cdot  t^{-is}\Big)x_1^{l-1}\\ \nonumber
              &=& \prod_{i=0}^{n-1}\Big(x_n^{-(l-1)} t^{is} \cdot g_l\cdot t^{-is}x_1^{l-1}\Big)\nonumber\\
              & = & \prod_{i=0}^{n-1}\Big( (x_1^{-(l-1)}t^{is})\cdot g_l \cdot (x_1^{(-l-1)}t^{is})^{-1}\Big)\nonumber
\end{eqnarray}
which implies that $m_1\in \langle\langle g_l\rangle\rangle_{G(\K)} $.  Thus Claim 1 is proved.  

From Claim 1 it follows that the peripheral subgroup $C=\pi_1(\partial X(\K_1))$ of $\K_1$ is contained in $\langle\langle g_l\rangle\rangle_{G(\K)}$ since $C\subseteq B$ and consequently we have 
$$G(\K)/\langle\langle g_l\rangle\rangle_{G(\K)} =A\ast_{C}B/\langle\langle g_l\rangle\rangle_{G(\K)} \cong A/\langle\langle g_l , C\rangle\rangle_A. $$
Thus,   we need to show that   $A/\langle\langle g_l, C\rangle\rangle_A=1$. It is easy to see that $A/\langle\langle C\rangle\rangle_A$ is cyclically  generated by $\pi(x_1)$, where $\pi:A\rightarrow  A/\langle\langle C\rangle\rangle_A$ is the canonical projection.  The result now follows from the fact that $\pi(g_l)=\pi(x_1^l\cdot t^sx_1^{(l-1)}t^{-s})=\pi(x_1)$.
\end{proof}

\begin{lemma}\label{L2}
If  $k\neq l$, then $g_k$ is not equivalent to $g_l$. \end{lemma}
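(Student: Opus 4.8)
The plan is to distinguish the killers $g_l$ by extracting an invariant of their automorphic orbit from the abelian-type quotients of $G(\mathfrak{k})$. The natural candidate is the image of $g_l$ in $H_1$ of a suitable cover, or more robustly, the action on the Alexander module $H_1(X(\mathfrak{k}); \mathbb{Z}[t^{\pm 1}])$ together with the peripheral structure. However, the cleanest route exploits the splitting $G(\mathfrak{k}) = A \ast_C B$ directly: any automorphism of a knot group preserves the peripheral subgroup up to conjugacy, and since $B = G(\mathfrak{k}_1)$ is the vertex group carrying the essential JSJ piece while $A = CS(m,n)$ is a Seifert piece, the JSJ decomposition of $X(\mathfrak{k})$ is preserved by every automorphism (by Waldhausen rigidity / uniqueness of JSJ). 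Hence an automorphism $\phi$ of $G(\mathfrak{k})$ induces, up to conjugation, an automorphism of the Bass--Serre graph fixing the two vertex groups $A$ and $B$ setwise.

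First I would pin down where $g_l$ and $g_k$ live relative to the splitting: both lie in the vertex group $A$, in fact in the free subgroup $F = F(x_1, \dots, x_n) \trianglelefteq A$, and they are not conjugate into $C$. So if $\phi(g_k) = g_l$, after adjusting $\phi$ by an inner automorphism we may assume $\phi(A) = A$ (here one must argue that $\phi$ cannot swap the roles of $A$ and $B$, which follows because $B$ is not Seifert fibered — $\mathfrak{k}_1$ is nontrivial — while $A$ is, and a self-homeomorphism of $X(\mathfrak{k})$ must respect the canonical tori and the Seifert vs.\ hyperbolic/further-decomposable dichotomy of the pieces). Then $\phi|_A$ is an automorphism of the cable space group $A$. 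Second, I would use the known structure of $\mathrm{Aut}(A)$: because $C_0$ (the exceptional fiber, represented by $t$) is characteristic in the Seifert fibered space $CS(m,n)$, the center-like subgroup generated by the regular fiber is characteristic, and the induced automorphism of $A/\langle\langle F\rangle\rangle \cong \mathbb{Z}$ — equivalently the abelianization information — must send the meridian class to $\pm$ itself. Concretely, $g_l$ abelianizes (in $H_1(X(\mathfrak{k})) \cong \mathbb{Z}$, generated by $m = x_1$) to $l - (l-1) = 1$, so every $g_l$ has the same abelianization; this tells us abelianization alone will not separate them, and we must look one level deeper.

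The real invariant comes from the metabelian (Alexander) quotient. Concretely, consider the quotient $A / \langle\langle C \rangle\rangle_A$, or better the infinite cyclic cover of $X(\mathfrak{k})$: the generators $x_i$ of $F$ map to elements of the Alexander module, and $g_l = x_1^l x_2^{-(l-1)}$ maps, writing $\bar{x}$ for the common image of all $x_i$ in the relevant rank-one piece and $t$ for the deck transformation acting as $x_2 = t^s x_1 t^{-s}$, to $(l - (l-1)t^s)\bar{x}$ in the module $\mathbb{Z}[t^{\pm 1}]\bar{x}$ (after quotienting by $B$-relations). Thus $g_l \mapsto (l - (l-1)t^s)\cdot \bar x$, and these are pairwise non-associate elements of $\mathbb{Z}[t^{\pm 1}]$ for distinct $l$ (a unit of $\mathbb{Z}[t^{\pm 1}]$ is $\pm t^j$, and $\pm t^j(l-(l-1)t^s) = k - (k-1)t^s$ forces $j=0$ and then $l=k$, using $s\neq 0$). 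Since an automorphism $\phi$ fixing $A$ and the peripheral structure induces a $\mathbb{Z}[t^{\pm 1}]$-semilinear automorphism of the Alexander module — semilinear via the automorphism $t \mapsto t^{\pm 1}$ of $\mathbb{Z}[t^{\pm 1}]$ — it must send the cyclic submodule generated by $g_k$'s image to that generated by $g_l$'s image, i.e.\ multiply the Laurent polynomial by a unit (possibly after $t \mapsto t^{-1}$); checking the finitely many cases $\pm t^j(k-(k-1)t^{\pm s}) = l-(l-1)t^s$ forces $k = l$, a contradiction.

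The main obstacle I anticipate is the bookkeeping that makes the phrase "automorphisms preserve the splitting and hence restrict to $A$" fully rigorous: one needs the uniqueness of the torus decomposition of $X(\mathfrak{k})$ (so that $\phi$ permutes the canonical tori), the fact that $\mathfrak{k}_1$ nontrivial guarantees $B$ is not Seifert fibered so $A$ and $B$ cannot be interchanged, and a normal-form argument in $A \ast_C B$ — using that each $g_l \in A \setminus C$ — to conclude that after an inner adjustment $\phi(A) = A$. A secondary technical point is justifying that the relevant rank-one quotient of the Alexander module is nonzero and that the $x_i$ all map to the same generator $\bar x$ of it; this should follow from $\Delta_{\mathfrak{k}_1}(1) = \pm 1$ and the Mayer--Vietoris sequence for $X(\mathfrak{k}) = CS(m,n) \cup X(\mathfrak{k}_1)$, which identifies the $\mathbb{Z}[t^{\pm 1}]$-module structure near the meridian with $\mathbb{Z}[t^{\pm1}]/(1 - t^{\,\cdot\,})$-type relations coming from $tx_it^{-1} = x_{\sigma(i)}$. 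Once these structural facts are in place, the separation of the $g_l$ is the short Laurent-polynomial computation above.
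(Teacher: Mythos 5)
Your first half—using Johannson/JSJ rigidity to arrange $\phi(A)=A$ after an inner adjustment, via the fact that $g_k$ is not conjugate into $C$—is exactly what the paper does (though your stated reason that $A$ and $B$ cannot be swapped, namely that $B$ is ``not Seifert fibered,'' fails when $\K_1$ is a torus knot; the paper instead invokes \cite[Theorem 14.6]{Johannson}, which preserves the peripheral characteristic piece regardless). The second half, however, has a genuine gap. The computation is carried out in a ``rank-one piece $\mathbb{Z}[t^{\pm1}]\bar x$'' of the Alexander module that does not exist as described: the Alexander module of a knot is a torsion $\mathbb{Z}[t^{\pm1}]$-module, so there is no free rank-one quotient in which ``non-associate Laurent polynomials'' retains its meaning; moreover the $x_i$ and $g_l$ all have augmentation $1$, hence do not define elements of $H_1$ of the infinite cyclic cover without a relative/crossed-homomorphism setup whose indeterminacy you would then have to control. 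In any cyclic torsion quotient $\mathbb{Z}[t^{\pm1}]/(q)$ the elements $l-(l-1)t^{s}$ and $k-(k-1)t^{s}$ may well become associates, so the separation argument does not transfer.

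The deeper missing idea is the justification of the step ``the induced semilinear automorphism must multiply the Laurent polynomial by a unit.'' A semilinear automorphism of a module sending one cyclic submodule to another does not force the generators to differ by a ring unit; what you are implicitly assuming is that $\phi$ sends $\bar x$ (the image of $x_1$) to $\pm t^{j}\bar x$, which is essentially the statement that $\phi(x_1)$ is conjugate to $x_1^{\pm1}$ in $A$. That is precisely the key point the paper has to prove, and it does so using the Seifert-fibered structure: first $\phi(t)=at^{\eta}a^{-1}$ because the exceptional fiber is unique, then the classification of automorphisms of the free product $A/\langle t^n\rangle\cong\mathbb{Z}\ast\mathbb{Z}_n$ gives $\phi_\ast(x_1)=at^{e_0}x_1^{\varepsilon}t^{e_1}a^{-1}$, and the homology constraint on $t$ forces $\phi(x_1)=bx_1^{\varepsilon}b^{-1}$. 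Only after that does a (well-defined) abelianization of $F(x_1,\dots,x_n)$ finish the job, by comparing the coefficient vectors of $\varepsilon[l x_i+(1-l)x_j]$ and $kx_1+(1-k)x_2$ in $\mathbb{Z}^n$ directly—this direct comparison is also what lets the paper avoid the unit-group subtleties your associateness argument would face. Your proposal has no substitute for this control of $\phi(x_1)$, and without it the module argument cannot be completed.
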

\begin{proof} Assume that  $\phi:G(\K)\rightarrow G(\K)$ is an automorphism such that $\phi(g_l)=g_k$ and let $f:X(\K)\rightarrow X(\K)$ be a homotopy equivalence inducing $\phi$. From \cite[Theorem 14.6]{Johannson}  it follows that  $f$ can be deformed into $\hat{f}:X(\K)\rightarrow X(\K)$ so that  $\hat{f}$ sends $X(\K_1)$ homeomorphically onto $X(\K_1)$ and $\hat{f}|_{CS(m,n)}:CS(m,n)\rightarrow CS(m,n)$ is a homotopy equivalence. Thus $\phi(A)$ is conjugated to $A$, that is,   $\phi(A)=gAg^{-1}$ for some $g\in G(\K)$.  Since $\phi(g_l)=g_k$  it implies that $g_k\in gAg^{-1}$. As $g_k$ is not conjugated (in $A$) to an element of $C$, it implies that $g\in A$ and so $\phi(A)=A$. By \cite[Proposition 28.4]{Johannson}, we may assume that $\hat{f}|_{CS(m,n)}$ is fiber preserving. Since $CS(m,n)$ has exactly one exceptional fiber, which represents $t$,  we must have $\phi(t)=at^{\eta }a^{-1}$ for some $a=v\cdot t^{z_1}\in A$ and some $\eta\in \{\pm 1\}$.

The automorphism $\phi|_{A}:A\rightarrow A$ induces an automorphism $\phi_{\ast}$ on the factor group $A/\langle t^n\rangle=\langle x_1,t \ | \ t^n=1\rangle=\mathbb{Z}\ast \mathbb{Z}_n$ such that $\phi_{\ast}(t)=at^{\eta}a^{-1}$. It is a standard fact about automorphisms of free products that we must have  ${\phi}_{\ast}(x_1)=at^{e_0}x_1^{\varepsilon}t^{e_1}a^{-1}$ for $e_0,e_1\in \mathbb{Z}$ and $\varepsilon \in \{\pm 1\}$. Thus,
 $$\phi(x_1)=at^{e_0}x_1^{\varepsilon} t^{e_1}a^{-1}t^{dn}=at^{e_0}\cdot x_1t^{e_0+e_1+dn}\cdot t^{-e_0}a^{-1}$$ for some $d\in \mathbb{Z}$. Since $t$ has non-zero homology in $H_1(X(\K))$ it follows that $e_0+e_1+dn=0$. Consequently,    $\phi(x_1)= b \cdot x_1^{\varepsilon}\cdot b^{-1}$, where  $b=at^{e_0}=v\cdot t^{z_2}\in A$ and  $z_2=z_1+e_0$.

Hence  we obtain  
\begin{eqnarray}
\phi(g_l) & = & \phi(x_1^lx_2^{-(l-1)})\nonumber \\
          & = & \phi(x_1^l\cdot  t^{s}x_1^{-(l-1)}t^{-s})\nonumber \\
          & = & bx_1^{\varepsilon l}b^{-1}\cdot at^{ \eta s}a^{-1}\cdot  bx_1^{-\varepsilon(l-1)}b^{-1}\cdot at^{-\eta s}a^{-1}\nonumber \\
          & = & vt^{z_2}x_1^{\varepsilon l}t^{-z_2}v^{-1}\cdot vt^{z_1}t^{\eta s}t^{-z_1}v^{-1}\cdot vt^{z_2}x_1^{-\varepsilon (l-1)}t^{-z_2}v^{-1}\cdot vt^{z_1}t^{-\eta s}t^{-z_1}v^{-1} \nonumber\\
          & = & vx_i^{\varepsilon l}   x_j^{-\varepsilon (l-1) }v^{-1} \nonumber
\end{eqnarray}
where $i=\sigma^{z_2}(1)$ and $j=\sigma^{z_2+\eta s}(1)$. Note that $i\neq j$ since $\sigma^{s}(1)=2$ and $\sigma^{-s}(1)=n$.  Hence,  $\phi(g_l)=g_k$ implies that  
$$ v(x_i^{\varepsilon l}\cdot  x_j^{-\varepsilon (l-1) })v^{-1} = x_1^k\cdot x_2^{-(k-1)}$$
in   $F(x_1,\ldots, x_n)$. Thus, in the abelinization  of $F(x_1,\ldots,x_n)$ we have
$$\varepsilon[lx_i+(1-l)x_j]=kx_1+(1-k)x_2$$ 
which implies that $\{i,j\}=\{1,2\}$. If $(i,j)=(1,2)$, then $\varepsilon l=k$ and so $k=|k|=|\varepsilon l|=l$. If $(i,j)=(2,1)$, then 
 $ \varepsilon l=k-1$ and 
 $\varepsilon(1-l)=k$.
Consequently, $\varepsilon=1$ and   $l+k=1$ which is impossible  since $k,l\geq 1$.  
 \end{proof}

\section{Connected sums and killers}
In this section we show that having infinitely many inequivalnet killers is preserved under connected sums of knots. This fact, Theorem~\ref{T1}, and Corollary 1.3 of \cite{Silver} imply that the group of knots whose exterior is a graph manifold have infinitely many inequivalent killers. 
 
Let $\K$ be a knot and $\K_1,\ldots, \K_n$ its prime factors, that is, $\K= \K_1\sharp \ldots \sharp \K_n$ and each $\K_i$ is a nontrivial prime knot.  Assume that $x\in G(\K_i)$ is a killer of $G(\K_i)$.  It is well-known that  $G(\K_i)\leq G(\K)$  and $ \langle m\rangle \leq G(\K_i)$ for all $i$, where $m$ denotes the meridian of $\K$.   From this we immediately see that $m\in \langle\langle x\rangle\rangle_{G(\K_i)} \subseteq \langle\langle x\rangle\rangle_{G(\K)}$ which implies that $G(\K)=\langle \langle m\rangle\rangle_{G(\K)}\subseteq \langle\langle x\rangle\rangle_{G(\K)}$, ie., $x$ is a killer of $G(\K)$.

Now suppose that $x,y\in G(\K_i)$ are  killers of $G(\K_i)$ and  that there exists an automorphism   $\phi$  of $G(\K)$ such that $\phi(x)=y$. $\phi$ is induced by a homotopy equivalence   $f:X(\K)\rightarrow X(\K)$.   From \cite[Theorem 14.6]{Johannson} it follows that  $f$ can be deformed into $\hat{f}:X(\K)\rightarrow X(\K)$ so that: 
\begin{enumerate}
\item[1.] $\hat{f}{|_V}:V\rightarrow V$ is a homotopy equivalence, where $V=S^1\times(n-\text{punctured disk})$ is the peripheral component of the characteristic submanifold of $X(\K)$. 
\item[2.] $\hat{f}|_{\overline{X(\K)-V}}  :\overline{X(\K)-V}\rightarrow \overline{X(\K)-V}$ is a homeomorphism. 
\end{enumerate}   
Note that $\overline{X(\K)-V}=X(\K_1)\mathop{\dot{\cup}}\ldots \mathop{\dot{\cup}} X(\K_n)$. Since $\hat{f}|_{\overline{X(\K)-V}}$  is a homeomorphism it follows that  $\hat{f}$ sends $X(\K_i)$ homeomorphically onto $X(\K_{\tau(i)})$ for some permutation $\tau$ of $\{1,\ldots,n\}$. Consequently,  there exists $g^{\prime}\in G(\K)$ such that  
$$\phi(G(\K_i))=g^{\prime}G(\K_{\tau(i)}){g^{\prime}}^{-1}.$$ 
If $\tau(i)=i$ and $g'\in G(\K_i)$, then $\phi$ induces an automorphism $\psi:= \phi|_{G(\K_i)}$ of $G(\K_i)$ such that $\psi(x)=y$, i.e., $x$ and $y$ are equivalent in $G(\K_i)$.   If $\tau(i)\neq i$ or $g'\notin G(\K_{\tau(i)})$, then it is not hard to see that $y$ is conjugated (in $G(\K_i)$) to an element of $\langle m\rangle$ since $y=\phi(x)\in G(\K_i)\cap g^{\prime}G(\K_{\tau(i)}){g^{\prime}}^{-1}$. As $\langle\langle m^k\rangle\rangle\neq G(\K)$ for $|k|\geq 2$ and $y$ normally generates $G(\K)$,  we conclude that $y$ is conjugated (in $G(\K_i))$) to $m^{\pm1}$. The same argument applied to $\phi^{-1}$ shows that $x$ is conjugated (in $G(\K_i))$) to $m^{\pm1}$. 

Therefore, if the group of one of the prime factors of $\K$ has infinitely many inequivalent killers, then so does the group of $\K$.  As a Corollary of Theorem~\ref{T1} and the remark made above  we obtain the following result.

\begin{corollary}
If  $\K$ is  a knot such that $X(\K)$ is a graph manifold,  then $G(\K)$ contains  infinitely many pairwise inequivalent killers.
\end{corollary}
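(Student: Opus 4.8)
The plan is to reduce the assertion, by the discussion preceding it, to the case of a prime knot, and then to feed the classical structure theory of graph-manifold knot exteriors into Theorem~\ref{T1} and \cite[Corollary~1.3]{Silver}. Throughout we take $\K$ to be nontrivial, as intended.

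First I would pass to the prime case. Write $\K=\K_1\sharp\cdots\sharp\K_n$ with each $\K_i$ a nontrivial prime knot; such a factor exists because $\K$ is nontrivial. As recalled above, $X(\K)$ splits along incompressible tori as $X(\K)=V\cup X(\K_1)\cup\cdots\cup X(\K_n)$ with $V$ a Seifert fibered composing space, so the JSJ decomposition of $X(\K)$ refines this splitting. Hence all pieces of the JSJ decomposition of $X(\K)$ are Seifert fibered if and only if all pieces of each $X(\K_i)$ are, i.e. $X(\K)$ is a graph manifold if and only if each $X(\K_i)$ is. By the discussion immediately preceding the corollary it therefore suffices to exhibit infinitely many pairwise inequivalent killers in $G(\K_i)$ for a single index $i$; so we may and do assume that $\K$ itself is a nontrivial prime knot with graph-manifold exterior.

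Next I would apply the structure theorem for such knots: a nontrivial prime knot whose exterior is a graph manifold is an iterated torus knot. Concretely, consider the piece $P$ of the JSJ decomposition of $X(\K)$ that contains the boundary torus $\partial X(\K)$. If $P=X(\K)$, then $X(\K)$ is Seifert fibered and $\K$ is a torus knot. Otherwise $P$ is a Seifert fibered manifold meeting $\partial X(\K)$ in exactly one torus; it cannot be a composing space, since $\K$ is prime, so it is a cable space $CS(m,n)$, and $\K$ is an $(m,n)$-cable knot about the knot $\K_1$ whose exterior is the closure of $X(\K)-P$. Moreover this companion $\K_1$ is nontrivial: an $(m,n)$-cable of the unknot is the $(m,n)$-torus knot, which is already subsumed in the torus-knot case. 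In the torus-knot case the conclusion is exactly \cite[Corollary~1.3]{Silver}, and in the cable case it is exactly Theorem~\ref{T1}. Together with the reduction of the previous paragraph this proves the corollary.

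The only non-routine ingredient is the $3$-manifold input of the middle paragraph: that the JSJ decomposition of a graph-manifold knot exterior forces the iterated-torus structure in the prime case, and, correspondingly, that being a graph manifold is inherited by the prime summands. Once that is granted, the corollary is a pure assembly of Theorem~\ref{T1}, \cite[Corollary~1.3]{Silver}, and the connected-sum argument above; the single point deserving a careful check is that the companion $\K_1$ arising in the prime cable case is genuinely nontrivial, so that the hypotheses of Theorem~\ref{T1} are met.
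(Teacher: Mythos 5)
Your proposal is correct and follows essentially the same route as the paper: reduce to prime summands via the connected-sum discussion, use the Jaco--Shalen classification of Seifert fibered pieces of knot exteriors to conclude that each prime factor is a torus knot or a cable knot about a nontrivial companion, and then invoke Theorem~\ref{T1} and \cite[Corollary 1.3]{Silver}. Your explicit check that the companion arising in the cable case is nontrivial (so that Theorem~\ref{T1} applies) is a point the paper leaves implicit, and is worth making.
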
 
\begin{proof}
From  \cite{JS}  it follows that  the only Seifert-fibered  manifolds that can be embedded into  a knot manifold with incompressible boundary   are torus knot complements, composing spaces and cable spaces. Thus, if $X(\K)$ is a graph manifold, then  one of the following holds:
\begin{enumerate}
\item  $\K$ is a torus knot.
\item  $\K$ is a cable knot.
\item  $\K=\K_1\sharp \ldots \sharp \K_n$ where each $\K_i$  is either a  torus knot or a cable knot.
\end{enumerate}
Now the result  follows from Theorem~\ref{T1} and  \cite[Corollary 1.3]{Silver}.
\end{proof}


\end{document}